\documentclass[12pt,twoside]{amsart}
\usepackage{amsmath, amsthm, amscd, amsfonts, amssymb, graphicx}
\usepackage{enumerate}
\usepackage[colorlinks=true,
linkcolor=blue,
urlcolor=cyan,
citecolor=red]{hyperref}
\usepackage{mathrsfs}
\addtolength{\topmargin}{-1.5cm}
\linespread {1.3}
\textwidth 17cm
\textheight 23cm
\addtolength{\hoffset}{-0.3cm}
\oddsidemargin 0cm
\evensidemargin 0cm
\setcounter{page}{1}
\newtheorem{theorem}{Theorem}[section]
\newtheorem{lemma}{Lemma}[section]
\newtheorem{remark}{Remark}[section]

\newtheorem{corollary}{Corollary}[section]

\numberwithin{equation}{section}

\begin{document}
	
\title{Revisiting the Gr\"{u}ss inequality}
\author{H. R. Moradi, S. Furuichi, Z. Heydarbeygi and M. Sababheh}
\subjclass[2010]{Primary 47A63, 26D15, Secondary 47A12, 47A30, 47A64.}
\keywords{ Gr\"{u}ss inequality, arithmetic mean, geometric mean, matrix mean.}

\begin{abstract}
In this article, we explore the celebrated Gr\"{u}ss inequality, where we present a new approach using the Gr\"{u}ss inequality to obtain new refinements of operator means inequalities. We also present several operator Gr\"{u}ss-type inequalities with applications to the numerical radius and entropies.
\end{abstract}
\maketitle
\pagestyle{myheadings}
\markboth{\centerline {Revisiting the Gr\"{u}ss Inequality}}
{\centerline {H. R. Moradi, S. Furuichi, Z. Heydarbeygi and M. Sababheh}}
\bigskip
\bigskip
\section{Introduction}
The celebrated \^Ceby\^sev's inequality \cite{ceby} states that if $h$ and $g$ are two functions having the same monotonicity on $\left[ a,b \right]$, then
\begin{equation}\label{1}
\frac{1}{b-a}\int\limits_{a}^{b}{h\left( t \right)dt}~\frac{1}{b-a}\int\limits_{a}^{b}{g\left( t \right)dt}\le \frac{1}{b-a}\int\limits_{a}^{b}{h\left( t \right)g\left( t \right)dt.}
\end{equation}

Reversing this inequality, Gr\"uss inequality \cite{gruss} states that, for the same $f,g$,
\[\frac{1}{b-a}\int\limits_{a}^{b}{h\left( t \right)g\left( t \right)dt}-\frac{1}{b-a}\int\limits_{a}^{b}{h\left( t \right)dt}\frac{1}{b-a}\int\limits_{a}^{b}{g\left( t \right)dt}\le \frac{1}{4}\left( M-m \right)\left( N-n \right)\]
provided that there exist  real numbers $m$, $M$, $n$, $N$ such that
\[m\le h\left( t \right)\le M\text{ }\And \text{ }n\le g\left( t \right)\le N; \forall a\leq t\leq b.\]

Gr\"{u}ss inequality has received a considerable attention in the literature, as one can see in \cite{ba11,001,002,003,li, mitri2}.

For a complex Hilbert space $\mathscr{H}$, $\mathbb{B}(\mathscr{H})$ will denote the $C^*-$algebra of all bounded operators on $\mathscr{H}$. Upper case letters $A,B$ and $T$ will be used to denote elements in $\mathbb{B}(\mathscr{H})$. When $A\in \mathbb{B}(\mathscr{H}),$ we say that $A$ is positive if $\left<Ax,x\right>>0$, for all non-zero vectors $x\in\mathscr{H}.$

In this article, we are interested in obtaining operator versions of the Gr\"{u}ss inequality and  implementing the Gr\"{u}ss inequality to obtain refinements of some means' inequalities, as a new approach in this direction.


\section{Scalar versions}
The arithmetic-geometric mean inequality (AM-GM inequality) states that
$$\sqrt{ab}\leq \frac{a+b}{2}, a,b>0.$$ The term on the left is called the geometric mean, while the right term is the arithmetic mean of $a$ and $b$. The weighted version of this inequality states that
$$a^{1-v}b^v\leq (1-v)a+vb, \forall 0\leq v\leq 1, a,b>0.$$ This inequality is usually referred to as Young's inequality. For simplicity, we use the notations $$a\sharp_vb:=a^{1-v}b^{v}\;{\text{and}}\;a\nabla_vb=(1-v)a+vb.$$ 
When $v=\frac{1}{2}$, we use $\sharp$ and $\nabla$ instead of $\sharp_{\frac{1}{2}}$ and $\nabla_{\frac{1}{2}}$, respectively. 
Refinements of this inequality have received a considerable attention in the literature, where many forms have been found. We refer the reader to \cite{5, 1, 3, 4, 2} as a sample of such refinements. 

In this article, we present a new approach to refine the AM-GM inequality, resulting in new forms of such refinements. This approach uses the  Gr\"{u}ss inequality.

To better state our results, we remind the reader of the so called  Heron mean, which is defined as follows:
\[{{F}_{t,v}}\left( a,b \right)=\left( 1-t \right)(a\sharp_v b)+t (a\nabla_v b); 0\leq t,v\leq 1.\]

\begin{theorem}\label{thm_1}
Let $a,b\ge 0$. If $g:\left[ 0,1 \right]\to \mathbb{R}$ is  non-decreasing on $\left[ 0,1 \right]$ and $0\leq v\leq 1,$ then
\begin{equation*}
a\sharp_v b+\frac{4}{g(1)-g(0)}\int_{0}^{1}\left(F_{t,v}(a,b)-F_{1/2,v}(a,b)\right)g(t)dt\leq a\nabla_v b.
\end{equation*}
In particular,
\[a\sharp b+\frac{4}{g\left( 1 \right)-g\left( 0 \right)}\left[ \int\limits_{0}^{1}{g\left( t \right){{F}_{t,1/2}}\left( a,b \right)dt}-{{F}_{{1}/{2},1/2\;}}\left( a,b \right)\int\limits_{0}^{1}{g\left( t \right)dt} \right]\le a \nabla b.\]
\end{theorem}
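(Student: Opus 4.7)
The plan is to reduce the stated bound to a direct application of the classical Gr\"uss inequality with $h(t)=t$ and the given non-decreasing $g$ on $[0,1]$.

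First, I would exploit the fact that $F_{t,v}(a,b)$ is affine in the parameter $t$. A direct computation gives
\[
F_{t,v}(a,b)-F_{1/2,v}(a,b)=\bigl(t-\tfrac{1}{2}\bigr)\bigl((a\nabla_v b)-(a\sharp_v b)\bigr),
\]
so the integral appearing on the left-hand side of the theorem factors as
\[
\int_0^1\bigl(F_{t,v}(a,b)-F_{1/2,v}(a,b)\bigr)g(t)\,dt=\bigl((a\nabla_v b)-(a\sharp_v b)\bigr)\int_0^1\bigl(t-\tfrac{1}{2}\bigr)g(t)\,dt.
\]
Since $\int_0^1 t\,dt=\tfrac{1}{2}$, the remaining scalar integral is exactly the Chebyshev deficit
\[
\int_0^1 t\,g(t)\,dt-\int_0^1 t\,dt\cdot\int_0^1 g(t)\,dt.
\]

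Next, I would invoke the Gr\"uss inequality on $[0,1]$ with $h(t)=t$ (so that $m=0$, $M=1$) and the given non-decreasing $g$ (so that $n=g(0)$, $N=g(1)$), which yields
\[
\int_0^1\bigl(t-\tfrac{1}{2}\bigr)g(t)\,dt\leq\frac{g(1)-g(0)}{4}.
\]
Multiplying this inequality through by the nonnegative scalar $4\bigl((a\nabla_v b)-(a\sharp_v b)\bigr)/(g(1)-g(0))$ --- nonnegativity being supplied by Young's inequality, with the implicit assumption $g(0)<g(1)$ to make the denominator meaningful --- and adding $a\sharp_v b$ to both sides produces the claimed bound.

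The \emph{in particular} statement follows by specializing to $v=\tfrac{1}{2}$ and redistributing the integral as $\int_0^1 F_{t,1/2}(a,b)g(t)\,dt-F_{1/2,1/2}(a,b)\int_0^1 g(t)\,dt$. The only conceptual subtlety is spotting that the affine dependence of $F_{t,v}$ on $t$ converts the left-hand side of the theorem into precisely the Gr\"uss deficit of the pair $\bigl(t,g(t)\bigr)$ rescaled by the AM--GM gap $(a\nabla_v b)-(a\sharp_v b)$; beyond this bookkeeping, no genuine technical obstacle is expected.
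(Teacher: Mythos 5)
Your proof is correct and is essentially the paper's argument: both amount to applying the Gr\"uss inequality to the pair $\bigl(F_{t,v}(a,b),g(t)\bigr)$ on $[0,1]$, your version merely factoring out the affine dependence on $t$ first so that Gr\"uss is invoked for $h(t)=t$ and then rescaled by the gap $(a\nabla_v b)-(a\sharp_v b)$. The only difference is that the paper additionally runs \^Ceby\^sev's inequality to record the nonnegativity of the Gr\"uss deficit, a step not needed for the stated upper bound, while you (reasonably) make explicit the implicit assumptions $g(0)<g(1)$ and $a\nabla_v b\ge a\sharp_v b$.
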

\begin{proof}
If $a,b>0$, then the function $f:\left[ 0,1 \right]\to \mathbb{R}$ defined by 
\[f\left( t \right)=F_{t,v}(a,b)\]
is  non-decreasing on $\left[ 0,1 \right]$. Furthermore,
\[f\left( 0 \right)=a\sharp_v b\text{  }\!\!\And\!\!\text{  }f\left( 1 \right)=a\nabla_v b.\]
Assume that $g$ is a  non-decreasing function on $\left[ 0,1 \right]$.
If we write the inequality \eqref{1} for the functions $f$ and $g$, we get
\[\int\limits_{0}^{1}{F_{t,v}(a,b)dt}\int\limits_{0}^{1}{g\left( t \right)dt}\le \int\limits_{0}^{1}{g\left( t \right)F_{t,v}(a,b)dt},\]
which can be written as
\begin{equation*}
\frac{1}{2}\left( a\sharp_v b+a\nabla_v b \right)\int\limits_{0}^{1}{g\left( t \right)dt}\le \int\limits_{0}^{1}{g\left( t \right)F_{t,v}(a,b)dt}.
\end{equation*}
This means that
$$F_{1/2,v}(a,b)\int_{0}^{1}g(t)dt\leq \int_{0}^{1}g(t)F_{t,v}(a,b)dt.$$
It follows from the Gr\"uss inequality  that
\[\begin{aligned}
   0&\le \int\limits_{0}^{1}{g\left( t \right)F_{t,v}(a,b)dt}-F_{1/2,v}\int\limits_{0}^{1}{g\left( t \right)dt}  
 & \le \frac{1}{4}\left( g\left( 1 \right)-g\left( 0 \right) \right)\left( a\nabla_v b-a\sharp_v b \right).  
\end{aligned}\]
Equivalently,
\[\begin{aligned}
  & a\sharp_v b+\frac{4}{g\left( 1 \right)-g\left( 0 \right)}\int_{0}^{1}\left(F_{t,v}(a,b)-F_{1/2,v}(a,b)\right)g(t)dt\le a\nabla_v b.  
\end{aligned}\]
This proves the first inequality.

Letting $v=\frac{1}{2}$ in the first inequality yields the second inequality and completes the proof.
\end{proof}
\begin{corollary}
Let $a,b\ge 0$. If $g:\left[ 0,1 \right]\to \mathbb{R}$ is  non-decreasing on $\left[ 0,1 \right]$, then
\[\sqrt{ab}\le {{F}_{1/2,{1}/{2}\;}}\left( a,b \right)\le \frac{\int_{0}^{1}{g\left( t \right)}{{F}_{t,1/2}}\left( a,b \right)dt}{\int_{0}^{1}{g\left( t \right)dt}}\le \frac{a+b}{2}.\]
\end{corollary}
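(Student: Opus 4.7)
The plan is to prove the three inequalities separately, each following from a short observation. I would begin by introducing $\phi(t) := F_{t,1/2}(a,b) = (1-t)\sqrt{ab} + t\,\frac{a+b}{2}$, an affine, non-decreasing function of $t$ on $[0,1]$ (the monotonicity coming from the scalar AM-GM inequality $\sqrt{ab}\le\frac{a+b}{2}$), with $\phi(0) = \sqrt{ab}$, $\phi(1) = \frac{a+b}{2}$, and $\phi(1/2) = F_{1/2,1/2}(a,b)$. The identity $\int_0^1 \phi(t)\,dt = \phi(1/2) = F_{1/2,1/2}(a,b)$, valid by the midpoint rule for affine integrands, will play a repeated role.

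The leftmost inequality is immediate: $F_{1/2,1/2}(a,b)$ is the arithmetic mean of $\sqrt{ab}$ and $\frac{a+b}{2}$, each at least $\sqrt{ab}$ by AM-GM. For the middle inequality, I would apply \^Ceby\^sev's integral inequality \eqref{1} to the two non-decreasing functions $\phi$ and $g$ on $[0,1]$, obtaining
\[
\int_0^1 \phi(t)\,dt \cdot \int_0^1 g(t)\,dt \;\le\; \int_0^1 \phi(t)\,g(t)\,dt;
\]
substituting the identity above and dividing by $\int_0^1 g(t)\,dt$ yields precisely the middle inequality. This step is essentially already carried out inside the proof of Theorem \ref{thm_1}, specialized to $v=1/2$.

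The rightmost inequality should come from the pointwise bound $\phi(t)\le \phi(1) = \frac{a+b}{2}$ on $[0,1]$: multiplying by $g(t)$, integrating, and dividing by $\int_0^1 g(t)\,dt$ delivers the conclusion. The only point I expect to need to handle with care is the sign of $g$: this multiplication preserves the inequality when $g\ge 0$, and the quotient itself is only meaningful when $\int_0^1 g(t)\,dt>0$. Both are natural tacit readings of ``non-decreasing $g$'' in this context, and no tool beyond Theorem \ref{thm_1}'s setup together with the scalar AM-GM is needed.
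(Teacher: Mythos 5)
Your argument is correct and is essentially the proof the paper intends (the corollary is stated without proof): the middle inequality is precisely the \^Ceby\^sev step already displayed inside the proof of Theorem \ref{thm_1} with $v=1/2$, and the two outer inequalities follow from the scalar AM--GM bound $\sqrt{ab}\le F_{t,1/2}(a,b)\le \frac{a+b}{2}$ exactly as you describe. You are also right to flag the tacit hypotheses $g\ge 0$ and $\int_0^1 g(t)\,dt>0$, which the paper omits: without them the rightmost inequality can genuinely fail (for instance $g(t)=t-0.4$ is non-decreasing with $\int_0^1 g=0.1>0$, yet $\int_0^1 t\,g(t)\,dt\big/\!\int_0^1 g(t)\,dt=4/3>1$, so the quotient exceeds $\frac{a+b}{2}$ whenever $a\neq b$).
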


Applying Gr\"{u}ss inequality, we obtain the following refinement of the AM-GM inequality, in terms of the Heinz and the logarithmic means. Recall that for two positive numbers $a,b$, the Heinz and logarithmic means are defined, respectively, by
$$H_t(a,b)=\frac{a\sharp_t b+b\sharp_t a}{2},\;0\leq t\leq 1\;{\text{and}}\;L(a,b)=\frac{b-a}{\ln b-\ln a}.$$
\begin{theorem}
Let $g$ be a  non-decreasing function on $\left[ 1/2,1 \right]$. Then for any $a,b> 0$,
\[a\sharp b+\frac{2}{g\left( 1 \right)-g\left( \frac{1}{2} \right)}\left[ \int\limits_{\frac{1}{2}}^{1}{g\left( t \right)H_t(a,b)dt}-L(a,b)\cdot\int\limits_{\frac{1}{2}}^{1}{g\left( t \right)dt} \right]\le a\nabla b.\]
\end{theorem}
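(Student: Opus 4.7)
The plan is to apply the Gr\"uss inequality to the pair $f(t) = H_t(a,b)$ and $g(t)$ on the subinterval $[1/2, 1]$, in close analogy with the proof of Theorem 2.1 but exploiting the specific structure of the Heinz mean. Two preparatory facts drive the argument.

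First, I would verify that $t \mapsto H_t(a, b)$ is non-decreasing on $[1/2, 1]$. Differentiating $H_t(a,b) = (a^{1-t}b^t + b^{1-t}a^t)/2$ gives $\frac{d}{dt}H_t(a,b) = \frac{\ln(b/a)}{2}\bigl(a^{1-t}b^t - b^{1-t}a^t\bigr)$, whose sign matches that of $2t - 1$. So on $[1/2, 1]$ we have the sharp two-sided bound $a\sharp b = H_{1/2}(a, b) \leq H_t(a, b) \leq H_1(a, b) = a\nabla b$, giving the relevant constants $m, M$ for Gr\"uss.

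Second --- and this is the mechanism that brings $L(a, b)$ into play --- I would use the elementary integral $\int_0^1 a^{1-t}b^t\, dt = L(a, b)$, which together with the symmetry $H_t(a, b) = H_{1-t}(a, b)$ yields
\[
\int_{1/2}^1 H_t(a, b)\, dt = \frac{L(a, b)}{2},
\]
so that the average value of $H_t$ over $[1/2, 1]$ is exactly $L(a, b)$. This is the identity that explains why the logarithmic mean shows up where the midpoint Heron value $F_{1/2, 1/2}(a,b)$ appeared in Theorem 2.1.

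With both $H_t(a,b)$ and $g$ non-decreasing and bounded on $[1/2, 1]$, the Gr\"uss inequality applied there produces a bound of the shape
\[
\int_{1/2}^1 H_t(a,b) g(t)\, dt - L(a, b) \int_{1/2}^1 g(t)\, dt \leq C \,(a\nabla b - a\sharp b)(g(1) - g(1/2)),
\]
for an explicit constant $C$ arising from the $1/4$ in Gr\"uss and the length $1/2$ of the interval; rearranging then yields the displayed inequality. The main obstacle I anticipate is careful bookkeeping of the constants due to the rescaling from the standard interval $[0, 1]$ to $[1/2, 1]$ --- the genuine mathematical content lies in the two preparatory facts above, after which the conclusion is a one-line rearrangement of Gr\"uss.
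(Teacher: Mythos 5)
Your proposal is correct and follows essentially the same route as the paper: apply the Gr\"uss inequality to $f(t)=H_t(a,b)$ and $g$ on $[1/2,1]$, using the monotonicity of $H_t$ on that interval and the identity $\int_{1/2}^{1}H_t(a,b)\,dt=\tfrac{1}{2}L(a,b)$ (which the paper obtains by direct integration of $\tfrac{x^t+x^{1-t}}{2}$ rather than via the symmetry $H_t=H_{1-t}$). Carrying out your bookkeeping, the normalized Gr\"uss bound on an interval of length $\tfrac12$ yields $\tfrac{2}{g(1)-g(1/2)}\bigl[\int_{1/2}^{1}g(t)H_t(a,b)\,dt-L(a,b)\int_{1/2}^{1}g(t)\,dt\bigr]\le\tfrac14\,(a\nabla b-a\sharp b)\le a\nabla b-a\sharp b$, which gives the stated inequality (indeed with room to spare).
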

\begin{proof}
For $x>0$, define
\[f\left( t \right)=\frac{{{x}^{t}}+{{x}^{1-t}}}{2},\text{ } t\in \left[ \frac{1}{2},1 \right].\]
This function is  non-decreasing on $\left[ 1/2,1 \right]$. Furthermore,
\[f\left( \frac{1}{2} \right)=\sqrt{x}\text{ }\And \text{ }f\left( 1 \right)=\frac{1+x}{2}.\]
Assume that $g$ is a  non-decreasing function on $\left[ 1/2,1 \right]$. If we write the inequality \eqref{1} for the functions $f$ and $g$, we get
\[\int\limits_{\frac{1}{2}}^{1}{\frac{{{x}^{t}}+{{x}^{1-t}}}{2}dt}\cdot\int\limits_{\frac{1}{2}}^{1}{g\left( t \right)dt}\le \frac{1}{2}\int\limits_{\frac{1}{2}}^{1}{g\left( t \right)\frac{{{x}^{t}}+{{x}^{1-t}}}{2}dt}.\]
or equivalently
\[\left( \frac{x-1}{2\ln x} \right)\cdot\int\limits_{\frac{1}{2}}^{1}{g\left( t \right)dt}\le \frac{1}{2}\int\limits_{\frac{1}{2}}^{1}{g\left( t \right)\frac{{{x}^{t}}+{{x}^{1-t}}}{2}dt}.\]
It follows from the Gr\"uss inequality  that
\[\frac{1}{2}\int\limits_{\frac{1}{2}}^{1}{g\left( t \right)\frac{{{x}^{t}}+{{x}^{1-t}}}{2}dt}-\left( \frac{x-1}{2\ln x} \right)\cdot\int\limits_{\frac{1}{2}}^{1}{g\left( t \right)dt}\le \left( \frac{g\left( 1 \right)-g\left( \frac{1}{2} \right)}{4} \right)\left( \frac{1+x}{2}-\sqrt{x} \right).\]
Therefore,
\[\sqrt{x}+\frac{2}{g\left( 1 \right)-g\left( \frac{1}{2} \right)}\left[ \int\limits_{\frac{1}{2}}^{1}{g\left( t \right)\frac{{{x}^{t}}+{{x}^{1-t}}}{2}dt}-\left( \frac{x-1}{\ln x} \right)\cdot\int\limits_{\frac{1}{2}}^{1}{g\left( t \right)dt} \right]\le \frac{1+x}{2}.\]
Replacing $x$ by $\frac{b}{a}$, we obtain the desired inequality. 
\end{proof}

If we take $g\left( t \right)=t$ in Theorem \ref{2}, we get
\begin{corollary}\label{3}
For any $x\ge 0$,
\[\sqrt{x}+\frac{4}{{{\ln }^{2}}x}\left( \frac{1}{8}\left( x-1 \right)\ln x+\sqrt{x}-\frac{x+1}{2} \right)\le \frac{1+x}{2}.\]
\end{corollary}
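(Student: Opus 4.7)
The plan is to specialize the preceding theorem (the one whose application is announced) to $g(t)=t$, together with the normalization $a=1$, $b=x$, and then carry out the two elementary integrals that appear. Under this specialization, the Heinz mean becomes $H_t(1,x)=(x^t+x^{1-t})/2$, the logarithmic mean becomes $L(1,x)=(x-1)/\ln x$, and the endpoint factor is
\[
\frac{2}{g(1)-g(1/2)}=\frac{2}{1/2}=4,
\]
so the corollary will follow as soon as the two integrals $\int_{1/2}^{1}t\,dt$ and $\int_{1/2}^{1}t\,H_t(1,x)\,dt$ are written in closed form.

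The first integral is immediate: $\int_{1/2}^{1}t\,dt=3/8$, which will contribute the term $L(1,x)\cdot\tfrac{3}{8}=\tfrac{3(x-1)}{8\ln x}$. The second integral is the only real computation, and I would split it as
\[
\int_{1/2}^{1} t\,\frac{x^t+x^{1-t}}{2}\,dt
=\tfrac{1}{2}\!\int_{1/2}^{1}t\,x^t\,dt+\tfrac{1}{2}\!\int_{1/2}^{1}t\,x^{1-t}\,dt,
\]
treating each piece by one integration by parts with $u=t$ and $dv=x^{\pm t}\,dt$. After the routine bookkeeping (several terms in $1/\ln x$ cancel in pairs when the two halves are added), I expect the clean form
\[
\int_{1/2}^{1}t\,H_t(1,x)\,dt
=\frac{1}{2}\left[\frac{x-1}{\ln x}-\frac{(\sqrt{x}-1)^{2}}{\ln^{2}x}\right].
\]

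Substituting these two closed forms into the specialized theorem gives
\[
\sqrt{x}+\frac{x-1}{2\ln x}-\frac{2(\sqrt{x}-1)^{2}}{\ln^{2}x}\leq\frac{1+x}{2},
\]
so the last step is purely cosmetic: using the identity $\sqrt{x}-\tfrac{x+1}{2}=-\tfrac{1}{2}(\sqrt{x}-1)^{2}$, the coefficient of $1/\ln^{2}x$ can be rewritten as $\tfrac{4}{\ln^{2}x}\!\left(\sqrt{x}-\tfrac{x+1}{2}\right)$, while the term $\tfrac{x-1}{2\ln x}$ becomes $\tfrac{4}{\ln^{2}x}\cdot\tfrac{1}{8}(x-1)\ln x$. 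Grouping these two pieces under a common factor of $4/\ln^{2}x$ produces exactly the displayed form of the corollary.

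There is no real obstacle in this argument; the main thing to watch is the signs and the cancellation of the $1/\ln x$ terms when the two integration-by-parts expressions are added. The assumption $x>0$, $x\neq1$ is what makes $\ln x$ nonzero and the integrals meaningful (the case $x=1$ holding trivially by continuity, both sides reducing to $1$).
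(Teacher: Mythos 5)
Your proposal is correct and is exactly the computation the paper intends: Corollary \ref{3} is obtained by taking $g(t)=t$ (and $a=1$, $b=x$) in the scalar Heinz--logarithmic mean theorem, and your closed forms $\int_{1/2}^{1}t\,dt=3/8$ and $\int_{1/2}^{1}tH_t(1,x)\,dt=\frac{1}{2}\bigl[\frac{x-1}{\ln x}-\frac{(\sqrt{x}-1)^2}{\ln^2 x}\bigr]$ check out and recombine to the stated inequality via $\sqrt{x}-\frac{x+1}{2}=-\frac{1}{2}(\sqrt{x}-1)^2$. The paper omits this routine calculation entirely, so your write-up simply supplies the details (including the correct observation that $x>0$, $x\neq 1$ is the real hypothesis, with $x=1$ by continuity).
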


Corollary \ref{3} implies the following refined arithmetic-geometric mean inequality with the logarithmic mean. 
\begin{corollary}
For any $a,b>0$,
$$
a\sharp b+\gamma(a,b) \cdot L(a,b) \le a\nabla b,
$$
where
$$
\gamma(a,b):=\frac{\ln^2 b/a}{2(\ln^2 b/a +4)} \geq 0.
$$
\end{corollary}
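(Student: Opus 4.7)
The plan is to derive this corollary as a purely algebraic consequence of Corollary~\ref{3}, by substituting $x = b/a$ and scaling by $a$.

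First, I would rearrange Corollary~\ref{3} so that only a single compact correction term appears. Moving $\frac{x+1}{2}$ to the left and writing $D(x) = \sqrt{x} - \frac{x+1}{2}$, the inequality becomes
\[
D(x)\left(1 + \frac{4}{\ln^2 x}\right) + \frac{x-1}{2\ln x} \le 0,
\]
and clearing denominators (noting $\ln^2 x + 4 > 0$) yields the clean form
\[
\sqrt{x} + \frac{(x-1)\ln x}{2(\ln^2 x + 4)} \le \frac{x+1}{2}.
\]

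Next, I would substitute $x = b/a$ and multiply through by $a > 0$. Under this substitution $a\sqrt{x} = \sqrt{ab} = a\sharp b$ and $a \cdot \frac{x+1}{2} = \frac{a+b}{2} = a\nabla b$, while the middle term transforms as
\[
a \cdot \frac{(x-1)\ln x}{2(\ln^2 x + 4)} = \frac{(b-a)\ln(b/a)}{2(\ln^2(b/a) + 4)},
\]
since $a(x-1) = b-a$ and $\ln x = \ln(b/a)$. The key identification is then
\[
(b-a)\ln(b/a) = \ln^2(b/a) \cdot \frac{b-a}{\ln b - \ln a} = \ln^2(b/a) \cdot L(a,b),
\]
so the middle term is exactly $\gamma(a,b)\cdot L(a,b)$ with $\gamma(a,b)$ as defined in the statement. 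The nonnegativity $\gamma(a,b) \ge 0$ is immediate because $\ln^2(b/a) \ge 0$ and $\ln^2(b/a) + 4 > 0$; the edge case $a = b$ is handled by the usual continuous extension $L(a,a) = a$, under which both sides collapse to $a$.

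There is no real obstacle here: the argument is a short algebraic manipulation of Corollary~\ref{3}, and the only substantive observation is that the factor $(b-a)\ln(b/a)$, obtained after scaling, is exactly $\ln^2(b/a)$ times the logarithmic mean, which is precisely what is required to match the denominator $\ln^2(b/a) + 4$ and produce the factor $\gamma(a,b)$.
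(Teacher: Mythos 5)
Your proposal is correct and follows essentially the same route as the paper: rearrange Corollary~\ref{3} into the equivalent form $\frac{\ln^2 x}{2(\ln^2 x+4)}\cdot\frac{x-1}{\ln x}\le\frac{x+1}{2}-\sqrt{x}$ and then substitute $x=b/a$ and scale by $a$, identifying $\frac{b-a}{\ln(b/a)}$ with $L(a,b)$. Your explicit treatment of the $a=b$ edge case is a small additional care the paper omits, but the argument is the same.
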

\begin{proof}
From Corollary \ref{3}, we have
\begin{equation}\label{6}
\frac{\ln^2 x}{2(\ln^2 x+4)} \frac{x-1}{\ln x} \leq \frac{x+1}{2}-\sqrt{x}.
\end{equation}
Replacing $x$ by $\frac{b}{a}$ implies the desired inequality and  completes the proof.
\end{proof}

It is interesting to compare \eqref{6} with the following inequality \cite{zou}:
\begin{equation}\label{7}
\frac{\ln ^2 x}{8}\sqrt{x} \leq \frac{x+1}{2} -\sqrt{x}.
\end{equation}
However, there is no ordering between L.H.S. in \eqref{6} and L.H.S. in \eqref{7},
since we have
$$
\sqrt{x} \le \frac{x-1}{\ln x},\quad \frac{1}{2(\ln^2x+4)}\le \frac{1}{8}
$$
for $x>0$. Actually, for a small $x>0$, we have the ordering
$$
\frac{1}{2(\ln^2x+4)}\frac{x-1}{\ln x} \le \frac{1}{8}\sqrt{x},
$$
but we have the opposite inequality for a large $x>0$, for example $x > 11288$.

\section{Non-commutative versions that follow from the scalar ones}
In this section, we present some non-commutative versions for the scalar inequalities we have shown earlier.   The arithmetic and geometric means of two positive $A,B\in\mathbb{B}(\mathscr{H})$ are defined, respectively, by
$$A\nabla_v B=(1-v)A+VB\;{\text{and}}\;A\sharp_v B=A^{\frac{1}{2}}\left(A^{-\frac{1}{2}}BA^{-\frac{1}{2}}\right)^vA^{\frac{1}{2}}, 0\leq v\leq 1.$$ Similar to the scalar case, we have the so called operator arithmetic geometric mean inequality
$$A\sharp_v B\leq A\nabla_v B, A,B\in\mathbb{B}(\mathscr{H})\;{\text{being}}\;positive\;{\text{and}}\;0\leq v\leq 1.$$
Refining the operator AM-GM inequality has received a considerable interest in the literature, as one can see in \cite{5,3,4,zou}. In the next result, we present a new type of such refinements, where we employ Gr\"{u}ss inequality. 
The first result, is the following operator version of Theorem \ref{thm_1}, in which we still adopt the notation 
$$F_{t,v}(A,B)=(1-t)(A\sharp_v B)+t (A\nabla_v B);$$ as the operator weighted Heron mean of the positive operators $A,B$.
\begin{theorem}\label{thm_op_1}
Let $A,B\in\mathbb{B}(\mathscr{H})$ be positive operators and let $0\leq v\leq 1.$  If $g:\left[ 0,1 \right]\to \mathbb{R}$ is  non-decreasing on $\left[ 0,1 \right]$, then
\begin{equation*}
A\sharp_v B+\frac{4}{g(1)-g(0)}\int_{0}^{1}\left(F_{t,v}(A,B)-F_{1/2,v}(A,B)\right)g(t)dt\leq A\nabla_v B.
\end{equation*}
\end{theorem}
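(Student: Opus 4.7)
The plan is to deduce this operator inequality from the scalar Theorem \ref{thm_1} via continuous functional calculus, which is the standard route for lifting scalar inequalities involving Kubo--Ando means to the operator setting. First I assume that $A$ is invertible; the general positive case will follow by replacing $A$ with $A + \varepsilon I$ and letting $\varepsilon \downarrow 0$, using the norm continuity of the weighted arithmetic and geometric means (and hence of the weighted Heron mean) in their arguments.

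Setting $X := A^{-1/2} B A^{-1/2}$, a positive invertible operator, I apply Theorem \ref{thm_1} with $a=1$ and $b=\lambda$ for each $\lambda$ in the spectrum of $X$. Writing $\widetilde{F}_{t,v}(\lambda) := (1-t)\lambda^{v} + t\bigl((1-v)+v\lambda\bigr)$, the scalar inequality becomes
$$\lambda^{v} + \frac{4}{g(1)-g(0)} \int_{0}^{1} \bigl(\widetilde{F}_{t,v}(\lambda) - \widetilde{F}_{1/2,v}(\lambda)\bigr)\, g(t)\, dt \le (1-v)+v\lambda.$$
Continuous functional calculus on $X$ then promotes this pointwise scalar inequality to the operator inequality
$$X^{v} + \frac{4}{g(1)-g(0)} \int_{0}^{1} \bigl(\widetilde{F}_{t,v}(X) - \widetilde{F}_{1/2,v}(X)\bigr)\, g(t)\, dt \le (1-v)I + vX,$$
where the operator-valued integral is well defined in the Bochner sense since $t \mapsto \widetilde{F}_{t,v}(X)$ is norm continuous in $t$ and $g$ is bounded on $[0,1]$ (being monotone there).

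Conjugating both sides by $A^{1/2}$ and invoking the standard identities
$$A\sharp_{v} B = A^{1/2} X^{v} A^{1/2}, \quad A\nabla_{v} B = A^{1/2}\bigl((1-v)I+vX\bigr) A^{1/2}, \quad F_{t,v}(A,B) = A^{1/2} \widetilde{F}_{t,v}(X) A^{1/2},$$
together with the fact that the congruence $T \mapsto A^{1/2} T A^{1/2}$ is a bounded linear map which can therefore be pulled through the integral, yields precisely the target inequality.

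The principal obstacle is not conceptual but purely bookkeeping: justifying that the integral commutes both with functional calculus on $X$ and with the congruence $A^{1/2}(\cdot)A^{1/2}$, and then carrying out the limiting argument as $\varepsilon \downarrow 0$ to drop the invertibility hypothesis on $A$. Since the inequality content is entirely furnished by the scalar Theorem \ref{thm_1} and these auxiliary steps are routine operator-theoretic manipulations, the proof reduces to a clean application of the functional-calculus framework for Kubo--Ando means.
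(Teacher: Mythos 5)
Your proof is correct and follows essentially the same route as the paper: substitute $a=1$, $b=\lambda$ into Theorem \ref{thm_1}, lift via functional calculus applied to $A^{-1/2}BA^{-1/2}$, and conjugate by $A^{1/2}$. The paper states this in two lines; you simply make explicit the routine justifications (interchange of the integral with the functional calculus and the congruence, and the $\varepsilon$-perturbation to drop invertibility) that the paper leaves implicit.
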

\begin{proof}
Letting $a=1$ in Theorem \ref{thm_1}, we have
$$b^v+\frac{4}{g(1)-g(0)}\int_{0}^{1}\left(\left\{(1-t)b^v+t(1-v+vb)\right\}-\frac{1}{2}\left(\sqrt{b}+\frac{1+b}{2}\right)\right)g(t)dt\leq 1-v+vb.$$ Applying a standard functional calculus argument with $b=A^{-\frac{1}{2}}BA^{-\frac{1}{2}}$, then multiplying both sides of the inequality by $A^{\frac{1}{2}}$ imply the desired inequality.
\end{proof}

On the other hand, an operator version of Theorem \ref{2} may be stated as follows. The proof is similar to that of Theorem \ref{thm_op_1}, hence is not included.
\begin{theorem}\label{2}
Let $A,B\in\mathbb{B}(\mathscr{H})$ be positive and let $g$ be a  non-decreasing function on $\left[ 1/2,1 \right]$. Then 
\[A\sharp B+\frac{2}{g\left( 1 \right)-g\left( \frac{1}{2} \right)}\left[ \int\limits_{\frac{1}{2}}^{1}g\left( t \right)\frac{{A\sharp_t B}+{A\sharp_{1-t}B}}{2}dt-(B-A)S_0(A|B)^{-1}A\cdot\int\limits_{\frac{1}{2}}^{1}{g\left( t \right)dt} \right]\le A\nabla B,\]
where $S_0(A|B)=A^{1/2}\log \left(A^{-1/2}BA^{-1/2}\right) A^{1/2}$ is the relative operator entropy of the positive operators $A,B$ \cite{fujii}.
\end{theorem}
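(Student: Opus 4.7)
The plan is to derive this operator inequality from the scalar Theorem 2 by functional calculus, exactly in the spirit of the proof of Theorem \ref{thm_op_1}. Setting $a=1$ and $b=x>0$ in the scalar statement produces the one-variable inequality
\[\sqrt{x}+\frac{2}{g(1)-g(1/2)}\left[\int_{\frac{1}{2}}^{1}g(t)\,\frac{x^t+x^{1-t}}{2}\,dt-\frac{x-1}{\ln x}\int_{\frac{1}{2}}^{1}g(t)\,dt\right]\le \frac{1+x}{2},\]
valid for every $x>0$.

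Writing $X:=A^{-1/2}BA^{-1/2}$, which is a positive invertible operator, the spectral theorem lets me substitute $X$ for the scalar $x$. Since the expression is a continuous function of a single variable and the two integrals are norm limits of Riemann sums of continuous functions of $X$, the scalar inequality lifts verbatim to an operator inequality for functions of $X$. I then conjugate both sides by $A^{1/2}$, which preserves the inequality and converts each term into a standard operator mean: $A^{1/2}\sqrt{X}A^{1/2}=A\sharp B$, $A^{1/2}\tfrac{1+X}{2}A^{1/2}=A\nabla B$, and from $A^{1/2}X^{t}A^{1/2}=A\sharp_t B$ (and the corresponding identity with $1-t$) I obtain $A^{1/2}\tfrac{X^t+X^{1-t}}{2}A^{1/2}=\tfrac{1}{2}(A\sharp_t B+A\sharp_{1-t} B)$.

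The only nontrivial identification is the logarithmic-mean term. Using $B-A=A^{1/2}(X-I)A^{1/2}$ and $S_0(A|B)=A^{1/2}(\ln X)A^{1/2}$, hence $S_0(A|B)^{-1}=A^{-1/2}(\ln X)^{-1}A^{-1/2}$, a direct computation gives
\[(B-A)\,S_0(A|B)^{-1}A \;=\; A^{1/2}(X-I)A^{1/2}\cdot A^{-1/2}(\ln X)^{-1}A^{-1/2}\cdot A \;=\; A^{1/2}\,\frac{X-I}{\ln X}\,A^{1/2},\]
where I have used that $X-I$ and $\ln X$ commute as functions of $X$. This is precisely the $A^{1/2}\cdot A^{1/2}$ conjugate of the scalar logarithmic mean $L(1,x)=\frac{x-1}{\ln x}$.

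I expect the bookkeeping in this last step to be the main subtlety: because $A$ and $B$ do not commute, the naturally symmetric conjugate $A^{1/2}f(X)A^{1/2}$ must be rewritten as the unavoidably asymmetric product $(B-A)\,S_0(A|B)^{-1}A$, and one has to place the factors in exactly the right order to match the expression in the statement. Once all four identifications are assembled, one reads off the claimed operator inequality.
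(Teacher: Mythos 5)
Your proof is correct and follows exactly the route the paper intends: the paper omits the proof with the remark that it is ``similar to that of Theorem \ref{thm_op_1}'', i.e.\ set $a=1$, $b=x$ in the scalar inequality and apply functional calculus to $X=A^{-1/2}BA^{-1/2}$ followed by conjugation with $A^{1/2}$, and your computation $(B-A)S_0(A|B)^{-1}A=A^{1/2}\frac{X-I}{\ln X}A^{1/2}$ correctly supplies the one identification the paper leaves implicit. The only (shared) caveat is that writing the logarithmic-mean term in the factored form $(B-A)S_0(A|B)^{-1}A$ tacitly assumes $\ln X$ is invertible, whereas the functional-calculus expression $A^{1/2}\frac{X-I}{\ln X}A^{1/2}$ is the one that is always well defined.
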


For the next  result, we  define
	\[A{{m}_{t,v}}B={{A}^{\frac{1}{2}}}{{\left( \left( 1-v \right)I+v{{\left( {{A}^{-\frac{1}{2}}}B{{A}^{-\frac{1}{2}}} \right)}^{t}} \right)}^{\frac{1}{t}}}{{A}^{\frac{1}{2}}},\]
for the positive $A,B\in\mathbb{B}(\mathscr{H})$, $-1\le t\le 1$ and $0\le v\le 1$. In this result, we present a refinement of the operator AM-GM inequality, without using a functional calculus argument.
\begin{theorem}
Let $A,B\in \mathbb{B}\left( \mathscr{H} \right)$ be two positive operators. If $g:\left[ 0,1 \right]\to \mathbb{R}$ is  non-decreasing on $\left[ 0,1 \right]$, then
\[A{{\sharp}_{v}}B+\frac{4}{g\left( 1 \right)-g\left( 0 \right)}\left[ \int\limits_{0}^{1}{\left( A{{m}_{t,v}}B \right)g\left( t \right)dt}-\int\limits_{0}^{1}{\left( A{{m}_{t,v}}B \right)dt}\int\limits_{0}^{1}{g\left( t \right)dt} \right]\le A{{\nabla }_{v}}B.\]
\end{theorem}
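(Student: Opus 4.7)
The plan is to lift the scalar Gr\"uss inequality to the operator setting via inner products with arbitrary vectors, exploiting the L\"owner monotonicity of the operator family $\{A m_{t,v} B\}_{t\in[0,1]}$.

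First I would set $X:=A^{-1/2}BA^{-1/2}$ and
\[
\Phi(t):=\bigl((1-v)I+vX^{t}\bigr)^{1/t},\qquad t\in(0,1],
\]
extended continuously by $\Phi(0):=X^{v}$ (the $t\to 0^{+}$ limit of the weighted power mean). A direct computation then yields
\[
A m_{t,v}B=A^{1/2}\Phi(t)A^{1/2},\qquad A m_{0,v}B=A\sharp_v B,\qquad A m_{1,v}B=A\nabla_v B.
\]
The crucial auxiliary step is to show that $\Phi$ is non-decreasing in $t$ in the L\"owner order; equivalently, for every $\xi\in\mathscr{H}$ the scalar function $h_\xi(t):=\langle(A m_{t,v}B)\xi,\xi\rangle$ is non-decreasing on $[0,1]$, with endpoints $h_\xi(0)=\langle(A\sharp_v B)\xi,\xi\rangle$ and $h_\xi(1)=\langle(A\nabla_v B)\xi,\xi\rangle$. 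By the spectral theorem this reduces, pointwise in the spectrum of $X$, to the classical fact that for each fixed $x>0$ the weighted power mean $\psi_x(t):=((1-v)+vx^{t})^{1/t}$ is non-decreasing in the exponent $t$.

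With the monotonicity of $h_\xi$ in hand, I would apply the scalar Gr\"uss inequality to the pair $(h_\xi,g)$ on $[0,1]$ to obtain
\[
\int_0^1 h_\xi(t)g(t)\,dt-\int_0^1 h_\xi(t)\,dt\int_0^1 g(t)\,dt\le\frac{g(1)-g(0)}{4}\bigl\langle(A\nabla_v B-A\sharp_v B)\xi,\xi\bigr\rangle.
\]
Rearranging, dividing by $(g(1)-g(0))/4$, and using linearity of the inner product to move it through the $t$-integrals converts this into $\langle T\xi,\xi\rangle\le\langle (A\nabla_v B)\xi,\xi\rangle$, where $T$ is precisely the self-adjoint operator displayed on the left-hand side of the theorem. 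Since $\xi\in\mathscr{H}$ is arbitrary, the stated L\"owner inequality follows.

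The main technical obstacle is the L\"owner monotonicity of $\Phi$ in $t$: the expression $((1-v)I+vX^{t})^{1/t}$ involves an inner operator power $X^{t}$ together with an outer $1/t$-power of the resulting operator, which is awkward to handle by direct operator manipulations. The cleanest route is through the spectral theorem, reducing the question to the scalar power-mean inequality in the exponent, supplemented by a short continuity argument at $t=0^{+}$. Once this monotonicity is secured, the remaining steps are a routine transcription of the scalar Gr\"uss bound to quadratic forms.
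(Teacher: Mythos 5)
Your proposal is correct and follows essentially the same route as the paper: pass to the quadratic forms $t\mapsto\langle (Am_{t,v}B)\xi,\xi\rangle$, use their monotonicity in $t$ together with the scalar Gr\"uss inequality, and conclude by the arbitrariness of $\xi$. The only difference is cosmetic --- you justify the monotonicity explicitly via the spectral theorem and the scalar power-mean inequality in the exponent, whereas the paper simply asserts it from the fact that $Am_{t,v}B$ is an operator mean.
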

\begin{proof}
Define 
	\[f\left( t \right)=\left\langle \left( A{{m}_{t,v}}B \right)x,x \right\rangle ,\text{ for any }x\in \mathscr{H}.\]
Of course, $f$ is  non-decreasing on $\left[ -1,1 \right]$ (since $A{{m}_{t,v}}B$ is an operator mean).
In particular, we have
	\[f\left( -1 \right)=\left\langle \left( A{{!}_{v}}B \right)x,x \right\rangle \text{, }f\left( 0 \right)=\left\langle \left( A{{\sharp}_{v}}B \right)x,x \right\rangle ,\text{ }f\left( 1 \right)=\left\langle \left( A{{\nabla }_{v}}B \right)x,x \right\rangle ,\] where $A!_vB=((1-v)A^{-1}+vB^{-1})^{-1}$ is the harmonic mean of $A,B$.
From the inequality \eqref{1}, we have
\[\int\limits_{0}^{1}{\left\langle \left( A{{m}_{t,v}}B \right)x,x \right\rangle dt}\int\limits_{0}^{1}{g\left( t \right)dt}\le \int\limits_{0}^{1}{\left\langle \left( A{{m}_{t,v}}B \right)x,x \right\rangle g\left( t \right)dt},\]
which is equivalent to
\[\left\langle \left( \int\limits_{0}^{1}{\left( A{{m}_{t,v}}B \right)dt}\int\limits_{0}^{1}{g\left( t \right)dt} \right)x,x \right\rangle \le \left\langle \left( \int\limits_{0}^{1}{\left( A{{m}_{t,v}}B \right)g\left( t \right)dt} \right)x,x \right\rangle.\]
Now, Gr\"uss inequality implies
\[\begin{aligned}
  & \left\langle \left[ \int\limits_{0}^{1}{\left( A{{m}_{t,v}}B \right)g\left( t \right)dt}-\int\limits_{0}^{1}{\left( A{{m}_{t,v}}B \right)dt}\int\limits_{0}^{1}{g\left( t \right)dt} \right]x,x \right\rangle  \\ 
 & \le \left\langle \left[ \left( \frac{g\left( 1 \right)-g\left( 0 \right)}{4} \right)\left( A{{\nabla }_{v}}B-A{{\sharp}_{v}}B \right) \right]x,x \right\rangle,
\end{aligned}\]
for any vector $x \in \mathscr{H}$. Therefore we obtain
\[A{{\sharp}_{v}}B+\frac{4}{g\left( 1 \right)-g\left( 0 \right)}\left[ \int\limits_{0}^{1}{\left( A{{m}_{t,v}}B \right)g\left( t \right)dt}-\int\limits_{0}^{1}{\left( A{{m}_{t,v}}B \right)dt}\int\limits_{0}^{1}{g\left( t \right)dt} \right]\le A{{\nabla }_{v}}B.\]
Therefore the desire inequality is obtained.
\end{proof}

On the other hand, a refinement of the operator geometric-harmonic mean inequality can be stated as follows. The proof is similar to the above arguments, and hence we omit it.
\begin{theorem}
Let $A,B\in \mathbb{B}\left( \mathscr{H} \right)$ be two positive operators. If $g:\left[ -1,0 \right]\to \mathbb{R}$ is  non-decreasing on $\left[ -1,0 \right]$, then
\[A{{!}_{v}}B+\frac{4}{g\left( 0 \right)-g\left( -1 \right)}\left[ \int\limits_{-1}^{0}{\left( A{{m}_{t,v}}B \right)g\left( t \right)dt}-\int\limits_{-1}^{0}{\left( A{{m}_{t,v}}B \right)dt}\int\limits_{-1}^{0}{g\left( t \right)dt} \right]\le A{{\sharp}_{v}}B\]
\end{theorem}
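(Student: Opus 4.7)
The plan is to mirror the proof of the previous theorem, replacing the interval $[0,1]$ by $[-1,0]$, and using the endpoints of the parameter $t$ to recover the harmonic and geometric means rather than the geometric and arithmetic ones. Fix an arbitrary $x\in\mathscr{H}$ and set
\[
f(t):=\langle (Am_{t,v}B)x,x\rangle,\qquad t\in[-1,0].
\]
First I would verify that $f$ is non-decreasing on $[-1,0]$. This follows from the fact that $Am_{t,v}B$ is (an operator analogue of) the weighted power mean in $t$, which is monotone in $t$; applied via functional calculus to $X:=A^{-1/2}BA^{-1/2}$, the map $t\mapsto ((1-v)I+vX^{t})^{1/t}$ is operator monotone in $t$, and conjugation by $A^{1/2}$ preserves the order.

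Next I would identify the endpoint values. At $t=0$ the limit definition yields $Am_{0,v}B=A^{1/2}X^{v}A^{1/2}=A\sharp_{v}B$. At $t=-1$ the identity $A^{1/2}B^{-1}A^{1/2}=X^{-1}$ gives
\[
Am_{-1,v}B=A^{1/2}\bigl((1-v)I+vX^{-1}\bigr)^{-1}A^{1/2}=\bigl((1-v)A^{-1}+vB^{-1}\bigr)^{-1}=A!_{v}B,
\]
so $f(-1)=\langle (A!_{v}B)x,x\rangle$ and $f(0)=\langle (A\sharp_{v}B)x,x\rangle$.

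With $f$ and $g$ both non-decreasing on $[-1,0]$, the \v Ceby\v sev inequality \eqref{1} gives
\[
\int_{-1}^{0}f(t)\,dt\int_{-1}^{0}g(t)\,dt\le \int_{-1}^{0}f(t)g(t)\,dt,
\]
and the Gr\"uss inequality applied to the same pair bounds the difference above by $\tfrac{1}{4}(g(0)-g(-1))(f(0)-f(-1))$. Rearranging and pulling the inner products through the integrals (which is justified since $x$ is fixed and the maps $t\mapsto Am_{t,v}B$ are weak-operator continuous) yields, for every $x\in\mathscr{H}$,
\[
\Bigl\langle\Bigl[A!_{v}B+\tfrac{4}{g(0)-g(-1)}\Bigl(\int_{-1}^{0}(Am_{t,v}B)g(t)\,dt-\int_{-1}^{0}(Am_{t,v}B)\,dt\int_{-1}^{0}g(t)\,dt\Bigr)\Bigr]x,x\Bigr\rangle\le \langle (A\sharp_{v}B)x,x\rangle.
\]
Since this holds for all $x$, the asserted operator inequality follows.

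The main obstacle I anticipate is the verification that $t\mapsto Am_{t,v}B$ is operator monotone (not merely scalar monotone) on $[-1,0]$; once this is established the rest is a transcription of the preceding proof. I would handle it by reducing to the scalar power mean through functional calculus on the spectrum of $X=A^{-1/2}BA^{-1/2}$, where the inequality $((1-v)+v\lambda^{s})^{1/s}\le ((1-v)+v\lambda^{t})^{1/t}$ for $\lambda>0$ and $-1\le s\le t\le 0$ is classical.
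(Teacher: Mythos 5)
Your proposal is correct and follows essentially the same route the paper intends: the paper omits this proof precisely because it is the argument of the preceding theorem transplanted to $[-1,0]$, namely setting $f(t)=\langle (Am_{t,v}B)x,x\rangle$, using its monotonicity and the endpoint values $f(-1)=\langle (A!_vB)x,x\rangle$, $f(0)=\langle (A\sharp_vB)x,x\rangle$, and then applying the \v Ceby\v sev and Gr\"uss inequalities before passing back to the operator order. Your added care in verifying $Am_{-1,v}B=A!_vB$ and reducing the $t$-monotonicity to the scalar power-mean inequality via functional calculus on $X=A^{-1/2}BA^{-1/2}$ only makes explicit what the paper asserts.
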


We conclude this section by presenting the following application towards relative operator entropies.
\begin{theorem}\label{theorem4.5}
Let $A,B\in\mathbb{B}(\mathscr{H})$ be positive and $0<s<1$. Then
\[ S_0(A|B)+2\int\limits_0^1(2t-1)S_{st}(A|B)dt \le S_s(A|B),\]
where $S_p(A|B):=A^{1/2}\ln_p \left(A^{-1/2}BA^{-1/2}\right) A^{1/2}$
is Tsallis relative operator entropy \cite{FYK2005} and $S_0(A|B)=\lim\limits_{p\to 0}S_p(A|B)=A^{1/2}\log \left(A^{-1/2}BA^{-1/2}\right) A^{1/2}$ is relative operator entropy.
\end{theorem}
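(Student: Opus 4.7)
The plan is to reduce Theorem \ref{theorem4.5} to the scalar inequality
\[\log x + 2\int_0^1 (2t-1)\ln_{st}(x)\,dt \le \ln_s(x),\qquad x>0,\]
establish the scalar statement by the Gr\"uss inequality, and then lift it to operators via the continuous functional calculus applied to $X=A^{-1/2}BA^{-1/2}$ followed by a conjugation with $A^{1/2}$.

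First I would verify the monotonicity in $t$ of $f(t):=\ln_{st}(x)=(x^{st}-1)/(st)$ on $[0,1]$ for each fixed $s>0$ and $x>0$, interpreting $f(0)=\log x$. Setting $h(p)=\ln_p(x)$ and differentiating gives $p^2 h'(p) = x^p(p\log x-1)+1$, whose own derivative equals $p\,x^p(\log x)^2\ge 0$ and which vanishes at $p=0$. Hence $h$ is non-decreasing on $[0,\infty)$, so $f$ is non-decreasing on $[0,1]$ with $f(0)=\log x$ and $f(1)=\ln_s x$.

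Next I would apply the Gr\"uss inequality on $[0,1]$ to $f$ and $g(t):=2t-1$. Here $g$ ranges in $[-1,1]$ and $\int_0^1 g(t)\,dt=0$, while $f$ ranges in $[\log x,\ln_s x]$, so
\[\int_0^1(2t-1)\ln_{st}(x)\,dt \le \tfrac{1}{4}(f(1)-f(0))(g(1)-g(0)) = \tfrac{1}{2}(\ln_s x-\log x),\]
which rearranges to the scalar inequality displayed above.

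Finally, applying the continuous functional calculus with $X=A^{-1/2}BA^{-1/2}$ yields the operator inequality
\[\log(X) + 2\int_0^1(2t-1)\ln_{st}(X)\,dt \le \ln_s(X),\]
the integral being interpreted as the functional calculus applied to the continuous scalar map $x\mapsto \int_0^1(2t-1)\ln_{st}(x)\,dt$ on $(0,\infty)$, which is justified by the spectral theorem and Fubini. Conjugating both sides by $A^{1/2}$ preserves operator inequalities and converts the three terms into $S_0(A|B)$, $S_{st}(A|B)$ and $S_s(A|B)$, respectively, giving the claim. The main obstacle is the monotonicity verification for $\ln_p(x)$ in $p$, uniformly in $x>0$; once that is established, the Gr\"uss step and the functional-calculus lift are both routine.
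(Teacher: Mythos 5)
Your proposal is correct and follows essentially the same route as the paper: apply the Gr\"uss inequality to $f(t)=\ln_{st}(x)$ with an affine weight (your $g(t)=2t-1$ versus the paper's $g(t)=2t$, which yield the same computation since $g(1)-g(0)=2$ in both cases), then lift via functional calculus and conjugation by $A^{1/2}$. Your explicit verification that $p\mapsto\ln_p(x)$ is non-decreasing, which justifies taking $f(0)$ and $f(1)$ as the bounds in the Gr\"uss step, is a welcome detail that the paper leaves implicit.
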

\begin{proof}
Define 
	\[f\left( t \right)=\frac{{{x}^{ts}}-1}{ts},\quad\text{ }x>0,~0\le s\le 1, t\in [0,1].\]
 Then
	\[f\left( 0 \right)=\log x\text{ and }f\left( 1 \right)=\frac{{{x}^{s}}-1}{s}.\]
Now, from the Gr\"{u}ss inequality
\begin{equation*}\label{theorem4.5_eq01}
\int\limits_{0}^{1}{\frac{{{x}^{ts}}-1}{ts}g\left( t \right)dt}-\int\limits_{0}^{1}{\frac{{{x}^{ts}}-1}{ts}dt}\int\limits_{0}^{1}{g\left( t \right)dt}\le \left( \frac{g\left( 1 \right)-g\left( 0 \right)}{4} \right)\left( \frac{{{x}^{s}}-1}{s}-\log x \right).
\end{equation*}
Namely,
	\[\log x+\frac{4}{g\left( 1 \right)-g\left( 0 \right)}\left[ \int\limits_{0}^{1}{\frac{{{x}^{ts}}-1}{ts}g\left( t \right)dt}-\int\limits_{0}^{1}{\frac{{{x}^{ts}}-1}{ts}dt}\int\limits_{0}^{1}{g\left( t \right)dt} \right]\le \frac{{{x}^{s}}-1}{s}.\]
If we set $g(t):=2t$, then the above inequality is written by
\[\log x +2\int\limits_0^1\frac{(2t-1)(x^{st}-1)}{st}dt \le \ln_s x,\]
where $\ln_s x:=\frac{x^s-1}{s}$. Applying functional calculus argument in the above inequality implies 
\[S_0(A|B)+2\int\limits_0^1(2t-1)S_{st}(A|B)dt \le S_s(A|B),\]
where $S_p(A|B):=A^{1/2}\ln_p \left(A^{-1/2}BA^{-1/2}\right) A^{1/2}$
is Tsallis relative operator entropy and $S_0(A|B)=\lim\limits_{p\to 0}S_p(A|B)=A^{1/2}\log \left(A^{-1/2}BA^{-1/2}\right) A^{1/2}$ is relative operator entropy. This completes the proof.
\end{proof}

Theorem \ref{theorem4.5} gives a refinement of $S_0(A|B) \leq S_s(A|B)$ shown in \cite[Proposition 3.1]{FYK2005}.

\section{Sharpening Gr\"{u}ss inequality and covariance versions}

We conclude this article by presenting some covariance inequalities that are of  Gr\"{u}ss type, with an application to the numerical radius.
\begin{theorem}
Let $T\in \mathbb{B}\left( \mathscr{H} \right)$ and $x\in \mathscr{H}$ be a unit vector. Then
\[\left| \left\langle \left| T \right|\left| {{T}^{*}} \right|x,x \right\rangle -\left\langle \left| T \right|x,x \right\rangle \left\langle \left| {{T}^{*}} \right|x,x \right\rangle  \right|\le \left\| \left| T \right|x \right\|\left\| \left| {{T}^{*}} \right|x \right\|-\left| \left\langle Tx,x \right\rangle  \right|^2.\]
\end{theorem}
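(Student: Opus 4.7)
The plan is to extract the inequality from Cauchy--Schwarz applied to a suitable ``centred'' covariance expression, followed by the mixed Schwarz inequality. Set $A=|T|$, $B=|T^*|$, and write $\alpha=\langle Ax,x\rangle\ge 0$, $\beta=\langle Bx,x\rangle\ge 0$; these are the quantities appearing in the second term on the left. The key observation is the covariance identity
\begin{equation*}
\langle ABx,x\rangle-\alpha\beta=\langle Bx-\beta x,\,Ax-\alpha x\rangle,
\end{equation*}
which is a one-line expansion using self-adjointness of $A$, the reality of $\alpha,\beta$, and $\|x\|=1$. Applying Cauchy--Schwarz on the right, combined with $\|Ax-\alpha x\|^2=\|Ax\|^2-\alpha^2$ and the analogous formula for $B$, I would obtain
\begin{equation*}
\bigl|\langle ABx,x\rangle-\alpha\beta\bigr|\le\sqrt{(\|Ax\|^2-\alpha^2)(\|Bx\|^2-\beta^2)}.
\end{equation*}

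Next, I would deploy the purely algebraic estimate $\sqrt{(a^2-\alpha^2)(b^2-\beta^2)}\le ab-\alpha\beta$, valid whenever $0\le\alpha\le a$ and $0\le\beta\le b$. This follows at once from the factorization
\begin{equation*}
(ab-\alpha\beta)^2-(a^2-\alpha^2)(b^2-\beta^2)=(a\beta-b\alpha)^2\ge 0,
\end{equation*}
together with $ab\ge\alpha\beta$, which holds because $\alpha\le\|Ax\|$ and $\beta\le\|Bx\|$ by Cauchy--Schwarz applied to the positive operators $A$ and $B$. Substituting $a=\|\,|T|x\|$ and $b=\|\,|T^*|x\|$ turns the previous bound into
\begin{equation*}
\bigl|\langle |T||T^*|x,x\rangle-\alpha\beta\bigr|\le \|\,|T|x\|\,\|\,|T^*|x\|-\alpha\beta.
\end{equation*}

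Finally, I would close the gap between $\alpha\beta$ and $|\langle Tx,x\rangle|^2$ by invoking the mixed Schwarz inequality $|\langle Tx,x\rangle|^2\le\langle|T|x,x\rangle\langle|T^*|x,x\rangle=\alpha\beta$. This is a standard consequence of the polar decomposition $T=U|T|$: one writes $\langle Tx,x\rangle=\langle|T|^{1/2}x,\,|T|^{1/2}U^*x\rangle$ and applies Cauchy--Schwarz, using $\langle U|T|U^*x,x\rangle=\langle|T^*|x,x\rangle$. Chaining the three estimates produces precisely the claimed inequality. The only step that requires genuine care is the middle algebraic one, where the sign condition $ab\ge\alpha\beta$ must be verified before squaring; everything else is a direct invocation of standard Hilbert-space identities, so I do not anticipate a conceptual obstacle.
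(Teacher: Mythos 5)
Your proposal is correct and follows essentially the same route as the paper: the same covariance identity $\langle ABx,x\rangle-\alpha\beta=\langle Bx-\beta x, Ax-\alpha x\rangle$, the same Cauchy--Schwarz step producing the product of variances, the same algebraic lemma (the paper states it as $(a^{2}-b^{2})(c^{2}-d^{2})\le(ac-bd)^{2}$, which is your estimate after squaring), and the same final appeal to the mixed Schwarz inequality $|\langle Tx,x\rangle|^{2}\le\langle|T|x,x\rangle\langle|T^{*}|x,x\rangle$. Your explicit verification of the sign condition $ab\ge\alpha\beta$ before taking square roots is a small point of care that the paper leaves implicit.
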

\begin{proof}
Let $A,B\in \mathbb{B}\left( \mathscr{H} \right)$ be two positive operators and $x\in \mathscr{H}$ be a unit vector. Then
\begin{align}
   \left| \left\langle ABx,x \right\rangle -\left\langle Ax,x \right\rangle \left\langle Bx,x \right\rangle  \right|&=\left| \left\langle \left( B-\left\langle Bx,x \right\rangle I \right)x,\left( A-\left\langle Ax,x \right\rangle I \right)x \right\rangle  \right| \nonumber\\ 
 & \le \left\| \left( A-\left\langle Ax,x \right\rangle I \right)x \right\|\left\| \left( B-\left\langle Bx,x \right\rangle I \right)x \right\| \nonumber\\ 
 & ={{\left( \left\langle {{A}^{2}}x,x \right\rangle -{{\left\langle Ax,x \right\rangle }^{2}} \right)}^{\frac{1}{2}}}{{\left( \left\langle {{B}^{2}}x,x \right\rangle -{{\left\langle Bx,x \right\rangle }^{2}} \right)}^{\frac{1}{2}}} \label{x1}\\ 
 & \le \sqrt{\left\langle {{A}^{2}}x,x \right\rangle \left\langle {{B}^{2}}x,x \right\rangle }-\left\langle Ax,x \right\rangle \left\langle Bx,x \right\rangle \label{x2}, 
\end{align}
where \eqref{x2} follows from the inequality $\left( {{a}^{2}}-{{b}^{2}} \right)\left( {{c}^{2}}-{{d}^{2}} \right)\le {{\left( ac-bd \right)}^{2}}$, $a,b,c,d\in \mathbb{R}$. Notice that \eqref{x1} is meaningful, since for any self-adjoint operator $X\in \mathbb{B}\left( \mathscr{H} \right)$, we have
\[{{\left\langle Xx,x \right\rangle }^{2}}\le \left\langle {{X}^{2}}x,x \right\rangle.\]
Therefore,
\begin{equation}\label{x3}
\left\langle Ax,x \right\rangle \left\langle Bx,x \right\rangle +\left| \left\langle ABx,x \right\rangle -\left\langle Ax,x \right\rangle \left\langle Bx,x \right\rangle  \right|\le \sqrt{\left\langle {{A}^{2}}x,x \right\rangle \left\langle {{B}^{2}}x,x \right\rangle }.
\end{equation}
Now, replacing $A$ and $B$ by $\left| T \right|$ and $\left| {{T}^{*}} \right|$, respectively, then we get
\[\left\langle \left| T \right|x,x \right\rangle \left\langle \left| {{T}^{*}} \right|x,x \right\rangle +\left| \left\langle \left| T \right|\left| {{T}^{*}} \right|x,x \right\rangle -\left\langle \left| T \right|x,x \right\rangle \left\langle \left| {{T}^{*}} \right|x,x \right\rangle  \right|\le \sqrt{\left\langle {{\left| T \right|}^{2}}x,x \right\rangle \left\langle {{\left| {{T}^{*}} \right|}^{2}}x,x \right\rangle }.\]
On the other hand, since (see e.g., \cite[pp. 75--76]{halmos})
\[\left| \left\langle Tx,x \right\rangle  \right|\le \sqrt{\left\langle \left| T \right|x,x \right\rangle \left\langle \left| {{T}^{*}} \right|x,x \right\rangle },\]
we infer that
\[{{\left| \left\langle Tx,x \right\rangle  \right|}^{2}}+\left| \left\langle \left| T \right|\left| {{T}^{*}} \right|x,x \right\rangle -\left\langle \left| T \right|x,x \right\rangle \left\langle \left| {{T}^{*}} \right|x,x \right\rangle  \right|\le \sqrt{\left\langle {{\left| T \right|}^{2}}x,x \right\rangle \left\langle {{\left| {{T}^{*}} \right|}^{2}}x,x \right\rangle },\]
as desired
\end{proof}
As an application, we present the following numerical radius inequality that refines the celebrated Kittaneh result in \cite{kittanehnumerical}.
\begin{corollary}
Let $T\in \mathbb{B}\left( \mathscr{H} \right)$. Then
\[{{\omega }^{2}}\left( T \right)+\underset{\left\| x \right\|=1}{\mathop{\underset{x\in \mathscr{H}}{\mathop{\inf }}\,}}\,\left\{ \left| \left\langle \left| T \right|\left| {{T}^{*}} \right|x,x \right\rangle -\left\langle \left| T \right|x,x \right\rangle \left\langle \left| {{T}^{*}} \right|x,x \right\rangle  \right| \right\}\le \frac{1}{2}\left\| {{\left| T \right|}^{2}}+{{\left| {{T}^{*}} \right|}^{2}} \right\|.\]
\end{corollary}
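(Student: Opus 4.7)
The plan is to combine the previous theorem with a standard AM-GM step and then exploit the fact that an infimum is a constant that can be pulled through a supremum. Concretely, I would start from the inequality established just above, namely
\[
|\langle Tx,x\rangle|^{2}+\bigl|\langle |T||T^{*}|x,x\rangle-\langle |T|x,x\rangle\langle |T^{*}|x,x\rangle\bigr|\le \sqrt{\langle |T|^{2}x,x\rangle\,\langle |T^{*}|^{2}x,x\rangle},
\]
valid for every unit vector $x\in\mathscr{H}$.

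Next I would bound the right-hand side using the scalar AM-GM inequality $\sqrt{ab}\le (a+b)/2$ applied to $a=\langle|T|^{2}x,x\rangle$ and $b=\langle|T^{*}|^{2}x,x\rangle$. This gives $\sqrt{ab}\le \tfrac12\langle(|T|^{2}+|T^{*}|^{2})x,x\rangle$, and one further bounds this by the operator norm $\tfrac12\||T|^{2}+|T^{*}|^{2}\|$, so that for every unit $x$,
\[
|\langle Tx,x\rangle|^{2}+\bigl|\langle |T||T^{*}|x,x\rangle-\langle |T|x,x\rangle\langle |T^{*}|x,x\rangle\bigr|\le \tfrac12\bigl\||T|^{2}+|T^{*}|^{2}\bigr\|.
\]

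The final step is a soft argument: the quantity
\[
\alpha:=\inf_{\|y\|=1}\bigl|\langle |T||T^{*}|y,y\rangle-\langle |T|y,y\rangle\langle |T^{*}|y,y\rangle\bigr|
\]
is a constant independent of $x$, and is bounded above by the Gr\"{u}ss term evaluated at any particular unit vector $x$. Therefore the displayed inequality yields
\[
|\langle Tx,x\rangle|^{2}+\alpha\le \tfrac12\bigl\||T|^{2}+|T^{*}|^{2}\bigr\|
\]
for every unit $x$. Taking the supremum over all unit $x$ turns $|\langle Tx,x\rangle|^{2}$ into $\omega^{2}(T)$ while leaving $\alpha$ unchanged, which is precisely the stated inequality.

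I do not foresee any real obstacle: the previous theorem does the analytic work, and the only ideas added here are one line of AM-GM plus the trivial observation that $\sup_{x}(f(x)+\alpha)=\sup_{x}f(x)+\alpha$. The one point to mention carefully is why the refinement strengthens Kittaneh's inequality, which simply follows from non-negativity of the Gr\"{u}ss-type quantity defining $\alpha$.
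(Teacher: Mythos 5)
Your proof is correct and follows essentially the same route as the paper: apply the AM--GM inequality to the right-hand side of the preceding theorem's inequality and then take the supremum over unit vectors. Your explicit justification for pulling the infimum of the Gr\"{u}ss-type term through the supremum is a welcome clarification of a step the paper states more tersely, but it is not a different argument.
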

\begin{proof}
Applying the arithmetic-geometric mean inequality, we have
\[{{\left| \left\langle Tx,x \right\rangle  \right|}^{2}}+\left| \left\langle \left| T \right|\left| {{T}^{*}} \right|x,x \right\rangle -\left\langle \left| T \right|x,x \right\rangle \left\langle \left| {{T}^{*}} \right|x,x \right\rangle  \right|\le \left\langle \left( \frac{{{\left| T \right|}^{2}}+{{\left| {{T}^{*}} \right|}^{2}}}{2} \right)x,x \right\rangle.\]
Consequently, by taking supremum over all unit vector $x\in \mathscr{H}$, we get
\[{{\omega }^{2}}\left( A \right)+\underset{\left\| x \right\|=1}{\mathop{\underset{x\in \mathscr{H}}{\mathop{\inf }}\,}}\,\left\{ \left| \left\langle \left| T \right|\left| {{T}^{*}} \right|x,x \right\rangle -\left\langle \left| T \right|x,x \right\rangle \left\langle \left| {{T}^{*}} \right|x,x \right\rangle  \right| \right\}\le \frac{1}{2}\left\| {{\left| T \right|}^{2}}+{{\left| {{T}^{*}} \right|}^{2}} \right\|.\]
This completes the proof.
\end{proof}

\begin{remark}\label{rem11}
From the inequality \eqref{x3}, we obtain the covariance inequality
\[\left\langle Ax,x \right\rangle \left\langle Bx,x \right\rangle -\left| \left\langle ABx,x \right\rangle  \right|\le \sqrt{\left\langle {{A}^{2}}x,x \right\rangle \left\langle {{B}^{2}}x,x \right\rangle }-\left\langle Ax,x \right\rangle \left\langle Bx,x \right\rangle,\]
for the positive operators $A,B\in\mathbb{B}(\mathscr{H})$.
Thus,
\[\left\langle Ax,x \right\rangle \left\langle Bx,x \right\rangle \le \frac{\sqrt{\left\langle {{A}^{2}}x,x \right\rangle \left\langle {{B}^{2}}x,x \right\rangle }+\left| \left\langle ABx,x \right\rangle  \right|}{2},\]
which implies
\[\begin{aligned}
  {{\left\langle Ax,x \right\rangle }^{2}}{{\left\langle Bx,x \right\rangle }^{2}}& \le {{\left( \frac{\sqrt{\left\langle {{A}^{2}}x,x \right\rangle \left\langle {{B}^{2}}x,x \right\rangle }+\left| \left\langle ABx,x \right\rangle  \right|}{2} \right)}^{2}} \\ 
 & \le \frac{\left\langle {{A}^{2}}x,x \right\rangle \left\langle {{B}^{2}}x,x \right\rangle +{{\left| \left\langle ABx,x \right\rangle  \right|}^{2}}}{2} \\ 
 & \le \left\langle {{A}^{2}}x,x \right\rangle \left\langle {{B}^{2}}x,x \right\rangle.   
\end{aligned}\]
This provides two refining terms of the celebrated inequality 
$$ {{\left\langle Ax,x \right\rangle }^{2}}{{\left\langle Bx,x \right\rangle }^{2}}\leq \left\langle {{A}^{2}}x,x \right\rangle \left\langle {{B}^{2}}x,x \right\rangle. $$
\end{remark}
We conclude this article by presenting some covariance inequalities similar to Remark \ref{rem11}, but in a more elaborated form. First, a scalar inequality.
\begin{lemma}\label{y1}
Let $a,b,c,d>0$. Then
\[\frac{1}{2}\frac{{{\left( {{a}^{2}}{{d}^{2}}-{{b}^{2}}{{c}^{2}} \right)}^{2}}}{{{a}^{2}}{{d}^{2}}+{{b}^{2}}{{c}^{2}}}+\left( {{a}^{2}}-{{b}^{2}} \right)\left( {{c}^{2}}-{{d}^{2}} \right)\le {{\left( ac-bd \right)}^{2}}.\]
\end{lemma}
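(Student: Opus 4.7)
My plan is to reduce the claimed inequality to an elementary identity plus the trivial bound $(x-y)^{2}\ge 0$, via two algebraic simplifications.

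First I would record the exact gap in the inequality $(a^{2}-b^{2})(c^{2}-d^{2})\le (ac-bd)^{2}$ that was already invoked earlier in the paper (the step labelled \eqref{x2}). A direct expansion gives the identity
\[
(ac-bd)^{2}-(a^{2}-b^{2})(c^{2}-d^{2})=(ad-bc)^{2},
\]
so the inequality to be proved is equivalent to
\[
\frac{1}{2}\,\frac{(a^{2}d^{2}-b^{2}c^{2})^{2}}{a^{2}d^{2}+b^{2}c^{2}}\le (ad-bc)^{2}.
\]

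Next I would factor the numerator on the left using $a^{2}d^{2}-b^{2}c^{2}=(ad-bc)(ad+bc)$. Writing $u=ad$ and $v=bc$ purely for bookkeeping, the inequality becomes
\[
\frac{(u-v)^{2}(u+v)^{2}}{2(u^{2}+v^{2})}\le (u-v)^{2},
\]
which, after dividing through by the non-negative factor $(u-v)^{2}$ (the case $u=v$ being trivial), reduces to $(u+v)^{2}\le 2(u^{2}+v^{2})$. This last inequality is just $(u-v)^{2}\ge 0$ after expansion, hence it holds.

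There is no real obstacle here: the work is entirely in spotting the right factorization. The only care needed is to justify the division by $(u-v)^{2}=(ad-bc)^{2}$ by treating the equality case $ad=bc$ separately (where both sides of the original inequality vanish on the left and the right is non-negative). I would then reverse the chain of implications to produce a clean, self-contained proof following the order: identity $\Rightarrow$ factor the numerator $\Rightarrow$ cancel the common $(ad-bc)^{2}$ $\Rightarrow$ apply $(ad+bc)^{2}\le 2(a^{2}d^{2}+b^{2}c^{2})$.
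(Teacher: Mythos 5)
Your proof is correct and, once unwound, is essentially the paper's argument: the paper establishes $\frac{1}{4}\frac{(x-y)^{2}}{x+y}\le \frac{x+y}{2}-\sqrt{xy}$ and applies it (implicitly) with $x=a^{2}d^{2}$, $y=b^{2}c^{2}$, which after doubling is exactly your inequality $\frac{(u-v)^{2}(u+v)^{2}}{2(u^{2}+v^{2})}\le (u-v)^{2}$ with $u=ad$, $v=bc$, and both arguments then close with the identity $(ac-bd)^{2}-(a^{2}-b^{2})(c^{2}-d^{2})=(ad-bc)^{2}$. Your write-up has the advantage of making explicit the final ``rearranging the terms'' step, which the paper leaves to the reader.
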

\begin{proof}
Since
\[{{\left( \frac{a+b}{2} \right)}^{2}}-ab=\left( \frac{a+b}{2}-\sqrt{ab} \right)\left( \frac{a+b}{2}+\sqrt{ab} \right),\]
we have
\[\frac{{{\left( \frac{a+b}{2} \right)}^{2}}-ab}{\frac{a+b}{2}+\sqrt{ab}}=\frac{a+b}{2}-\sqrt{ab}.\]
Equivalently,
\[{{\left( \frac{\frac{a+b}{2}+\sqrt{ab}}{2} \right)}^{-1}}\frac{{{\left( \frac{a+b}{2} \right)}^{2}}-ab}{2}=\frac{a+b}{2}-\sqrt{ab}.\]
Now, by applying the arithmetic-geometric mean inequality, we obtain
\[\frac{1}{4}\frac{{{\left( a-b \right)}^{2}}}{a+b}\le \frac{a+b}{2}-\sqrt{ab}\le \frac{1}{8}\frac{{{\left( a-b \right)}^{2}}}{\sqrt{ab}}.\]
Rearranging the terms, we get
\[\frac{1}{2}\frac{{{\left( {{a}^{2}}{{d}^{2}}-{{b}^{2}}{{c}^{2}} \right)}^{2}}}{{{a}^{2}}{{d}^{2}}+{{b}^{2}}{{c}^{2}}}+\left( {{a}^{2}}-{{b}^{2}} \right)\left( {{c}^{2}}-{{d}^{2}} \right)\le {{\left( ac-bd \right)}^{2}},\]
as desired.
\end{proof}
\begin{theorem}\label{13}
Let $A,B\in \mathbb{B}\left( \mathscr{H} \right)$ be positive operators such that $mI\le A \le MI$ $nI\le B \le NI$, for some positive scalars $m,M,n,N$. Then for any unit vector $x \in \mathscr{H}$,
\[\begin{aligned}
  & \left| \left\langle ABx,x \right\rangle -\left\langle Ax,x \right\rangle \left\langle Bx,x \right\rangle  \right| \\ 
 & \le \frac{\left( M-m \right)\left( N-n \right)}{4}-\left( \sqrt{\mathscr{C}\left( A,x \right)\mathscr{C}\left( B,x \right)}+\frac{{{\left( {{\left( M-m \right)}^{2}}\mathscr{C}\left( B,x \right)-{{\left( N-n \right)}^{2}}\mathscr{C}\left( A,x \right) \right)}^{2}}}{8{{\left( M-m \right)}^{2}}\mathscr{C}\left( B,x \right)+{{\left( N-n \right)}^{2}}\mathscr{C}\left( A,x \right)} \right), 
\end{aligned}\]
where
\[\mathscr{C}\left( A,x \right)=\left\langle \left( M-A \right)\left( A-m \right)x,x \right\rangle \text{ and }\mathscr{C}\left( B,x \right)=\left\langle \left( N-B \right)\left( B-n \right)x,x \right\rangle.\]
\end{theorem}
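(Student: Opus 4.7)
The strategy is to combine the covariance Cauchy--Schwarz bound that already appeared in the proof of the preceding theorem with spectral-bound estimates of the variances via $\mathscr{C}(A,x)$ and $\mathscr{C}(B,x)$, then sharpen the resulting product estimate using the purely algebraic Lemma \ref{y1}. Throughout, write $X := \langle ABx,x\rangle - \langle Ax,x\rangle\langle Bx,x\rangle$.

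First I would recycle inequality \eqref{x1} from the previous proof to record
\[
|X| \le \bigl(\langle A^2x,x\rangle - \langle Ax,x\rangle^2\bigr)^{1/2}\bigl(\langle B^2x,x\rangle - \langle Bx,x\rangle^2\bigr)^{1/2}.
\]
Next, using the identity $\mathscr{C}(A,x) = (M+m)\langle Ax,x\rangle - mM - \langle A^2x,x\rangle$, which comes from expanding $(M-A)(A-m)$, together with the elementary estimate $(M-t)(t-m) \le (M-m)^2/4$ applied at $t=\langle Ax,x\rangle$, I would obtain
\[
\langle A^2x,x\rangle - \langle Ax,x\rangle^2 \le \tfrac{(M-m)^2}{4} - \mathscr{C}(A,x),
\]
and similarly for $B$. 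These two bounds combined yield
\[
|X|^2 \le \Bigl(\tfrac{(M-m)^2}{4} - \mathscr{C}(A,x)\Bigr)\Bigl(\tfrac{(N-n)^2}{4} - \mathscr{C}(B,x)\Bigr).
\]

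The third step is to apply Lemma \ref{y1} with the choices
\[
a = \tfrac{M-m}{2},\quad b = \sqrt{\mathscr{C}(A,x)},\quad c = \tfrac{N-n}{2},\quad d = \sqrt{\mathscr{C}(B,x)}.
\]
A direct substitution gives $ac - bd = \frac{(M-m)(N-n)}{4} - \sqrt{\mathscr{C}(A,x)\mathscr{C}(B,x)}$ and turns the Lemma \ref{y1} correction $\tfrac{1}{2}\tfrac{(a^2d^2-b^2c^2)^2}{a^2d^2+b^2c^2}$ into precisely the fractional term displayed in the statement of Theorem \ref{13}.

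The one nontrivial step, which I expect to be the main obstacle, is converting the squared inequality
\[
|X|^2 \le (ac-bd)^2 - \tfrac{1}{2}\tfrac{(a^2d^2-b^2c^2)^2}{a^2d^2+b^2c^2}
\]
into the linear form claimed on the right-hand side of the theorem. The natural tool is an elementary estimate of the Bernoulli type, $\sqrt{u^2 - v} \le u - v/(2u)$ for $0 \le v \le u^2$, applied with $u = ac-bd > 0$; one should also verify this is nonnegative, which follows from the inequality $\sqrt{\mathscr{C}(A,x)\mathscr{C}(B,x)} \le \frac{(M-m)(N-n)}{4}$ (itself a consequence of $\mathscr{C}(A,x) \le (M-m)^2/4$ and AM--GM). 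Degenerate cases where $\mathscr{C}(A,x)$, $\mathscr{C}(B,x)$, or $ac-bd$ vanish should be handled as limits.
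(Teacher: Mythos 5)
Your first three steps reproduce the paper's own argument: the Cauchy--Schwarz covariance bound \eqref{x1} (which you correctly read with the square roots that the paper's restatement inside this proof accidentally drops), the variance estimates $\langle A^2x,x\rangle-\langle Ax,x\rangle^2\le\bigl(\tfrac{M-m}{2}\bigr)^2-\mathscr{C}(A,x)$ and its analogue for $B$, and the substitution $a=\tfrac{M-m}{2}$, $b=\sqrt{\mathscr{C}(A,x)}$, $c=\tfrac{N-n}{2}$, $d=\sqrt{\mathscr{C}(B,x)}$ into Lemma \ref{y1}. Up to that point you and the paper agree, and you correctly arrive at
\[
|X|^2\le (ac-bd)^2-K,\qquad K:=\frac{1}{2}\,\frac{(a^2d^2-b^2c^2)^2}{a^2d^2+b^2c^2},
\]
with $K$ equal to the fraction appearing in the statement of Theorem \ref{13}.

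The gap is precisely in the step you flag as ``the main obstacle,'' and it does not close. Your Bernoulli estimate $\sqrt{u^2-v}\le u-\tfrac{v}{2u}$ yields $|X|\le(ac-bd)-\tfrac{K}{2(ac-bd)}$, whereas the theorem asserts $|X|\le(ac-bd)-K$; these are genuinely different quantities, and neither dominates the other uniformly (the comparison flips according to whether $2(ac-bd)\le 1$). In fact the printed bound cannot be correct: under the rescaling $A\mapsto\lambda A$, $m\mapsto\lambda m$, $M\mapsto\lambda M$ the subtracted fraction scales like $\lambda^2$ while the left-hand side and every other term on the right scale like $\lambda$, so for large $\lambda$ the right-hand side becomes negative whenever $(M-m)^2\mathscr{C}(B,x)\ne(N-n)^2\mathscr{C}(A,x)$. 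The paper's own proof commits exactly the error you were trying to engineer around: it passes from $(a^2-b^2)(c^2-d^2)\le(ac-bd)^2-K$ directly to $\sqrt{(a^2-b^2)(c^2-d^2)}\le(ac-bd)-K$ with no justification, and that implication would require $2(ac-bd)\le 1+K$, which is not available. So your plan, carried through honestly, proves a corrected statement --- either the squared form $|X|^2\le(ac-bd)^2-K$ or the linear form with $\tfrac{K}{2(ac-bd)}$ in place of $K$ --- but not Theorem \ref{13} as stated; the defect lies in the theorem (and in the paper's final step), not in your reduction to it.
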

\begin{proof}
It has been shown in \eqref{x1} that
\[\left| \left\langle ABx,x \right\rangle -\left\langle Ax,x \right\rangle \left\langle Bx,x \right\rangle  \right|\le \left( \left\langle {{A}^{2}}x,x \right\rangle -{{\left\langle Ax,x \right\rangle }^{2}} \right)\left( \left\langle {{B}^{2}}x,x \right\rangle -{{\left\langle Bx,x \right\rangle }^{2}} \right).\]
By the arithmetic-geometric mean inequality, we have
\begin{equation}\label{y2}
\begin{aligned}
  & \left\langle A^2x,x\right\rangle-{{\left\langle Ax,x \right\rangle }^{2}} \\ 
 & =\left( M-\left\langle Ax,x \right\rangle  \right)\left( \left\langle Ax,x \right\rangle -m \right)-\left\langle \left( MI-A \right)\left( A-mI \right)x,x \right\rangle  \\ 
 & \le {{\left( \frac{M-m}{2} \right)}^{2}}-\left\langle \left( MI-A \right)\left( A-mI \right)x,x \right\rangle,
\end{aligned}
\end{equation}
and similarly
\[\begin{aligned}
  \left\langle B^2x,x\right\rangle-{{\left\langle Bx,x \right\rangle }^{2}} 
  \le {{\left( \frac{N-n}{2} \right)}^{2}}-\left\langle \left( NI-B \right)\left( B-nI \right)x,x \right\rangle.
\end{aligned}\]
Now, by applying Lemma \ref{y1}, we get
\[\begin{aligned}
  & \left| \left\langle ABx,x \right\rangle -\left\langle Ax,x \right\rangle \left\langle Bx,x \right\rangle  \right| \\ 
 & \le \sqrt{\left( {{\left( \frac{M-m}{2} \right)}^{2}}-\mathscr{C}\left( A,x \right) \right)\left( {{\left( \frac{N-n}{2} \right)}^{2}}-\mathscr{C}\left( B,x \right) \right)} \\ 
 & \le \frac{\left( M-m \right)\left( N-n \right)}{4}-\left( \sqrt{\mathscr{C}\left( A,x \right)\mathscr{C}\left( B,x \right)}+\frac{{{\left( {{\left( M-m \right)}^{2}}\mathscr{C}\left( B,x \right)-{{\left( N-n \right)}^{2}}\mathscr{C}\left( A,x \right) \right)}^{2}}}{8\left( {{\left( M-m \right)}^{2}}\mathscr{C}\left( B,x \right)-{{\left( N-n \right)}^{2}}\mathscr{C}\left( A,x \right) \right)} \right). 
\end{aligned}\]
This completes the proof of the theorem.
\end{proof}
\begin{remark}
Since
\[\left( NI-B \right)\left( B-nI \right)={{\left( \frac{N-n}{2} \right)}^{2}}I-{{\left| B-\frac{N+n}{2}I \right|}^{2}},\]
and
\[\left( MI-A \right)\left( A-mI \right)={{\left( \frac{M-m}{2} \right)}^{2}}I-{{\left| A-\frac{M+m}{2}I \right|}^{2}},\]
we infer from \eqref{y2} that
\[\left\langle {{A}^{2}}x,x \right\rangle -{{\left\langle Ax,x \right\rangle }^{2}}\le \left\langle {{\left| A-\frac{M+m}{2}I \right|}^{2}}x,x \right\rangle \]
and
\[\left\langle {{B}^{2}}x,x \right\rangle -{{\left\langle Bx,x \right\rangle }^{2}}\le \left\langle {{\left| B-\frac{N+n}{2}I \right|}^{2}}x,x \right\rangle. \]
This in turns implies that
\[\left| \left\langle ABx,x \right\rangle -\left\langle Ax,x \right\rangle \left\langle Bx,x \right\rangle  \right|\le \left\| A-\frac{M+m}{2}I \right\|\left\| B-\frac{N+n}{2}I \right\|.\]
Since $mI\le A\le MI$ and $nI\le B\le NI$, then 
	\[\left| \left\langle \left( A-\frac{M+m}{2}I \right)x,x \right\rangle  \right|\le \frac{M-m}{2},\]
and
	\[\left| \left\langle \left( B-\frac{N+n}{2}I \right)x,x \right\rangle  \right|\le \frac{N-n}{2}.\]
The above relations imply
\begin{equation*}\label{10}
\left\| A-\frac{M+m}{2}I \right\|\le \frac{M-m}{2},
\end{equation*}
and
\begin{equation*}\label{11}
\left\| B-\frac{N+n}{2}I \right\|\le \frac{N-n}{2}.
\end{equation*}
Consequently,
\[\begin{aligned}
   \left| \left\langle ABx,x \right\rangle -\left\langle Ax,x \right\rangle \left\langle Bx,x \right\rangle  \right|\le \left\| A-\frac{M+m}{2}I \right\|\left\| B-\frac{N+n}{2}I \right\|  \le \frac{\left( M-m \right)\left( N-n \right)}{4}.  
\end{aligned}\]
\end{remark}


\section*{Acknowledgement}
The author (S.F.) was partially supported by JSPS KAKENHI Grant Number 16K05257.

{\tiny (H. R. Moradi) Department of Mathematics, Payame Noor University (PNU), P.O. Box 19395-4697, Tehran, Iran}

{\tiny \textit{E-mail address:} hrmoradi@mshdiau.ac.ir }

\medskip

{\tiny (S. Furuichi) 
Department of Information Science, College of Humanities and Sciences, Nihon University, 3-25-40, Sakurajyousui, Setagaya-ku,
Tokyo, 156-8550, Japan}

{\tiny \textit{E-mail address:} furuichi@chs.nihon-u.ac.jp}

\medskip
{\tiny (Z. Heydarbeygi) Department of Mathematics, Payame Noor University (PNU), P.O. Box 19395-4697, Tehran, Iran}

{\tiny \textit{E-mail address:} zheydarbeygi@yahoo.com}

\medskip
{\tiny (M. Sababheh) Department of basic sciences, Princess Sumaya University for Technology, Amman 11941, Jordan}

{\tiny \textit{E-mail address:} sababheh@psut.edu.jo}

\end{document}